\documentclass{llncs}

\usepackage[utf8]{inputenc}
\usepackage{enumerate}
\usepackage[shortlabels]{enumitem}
\usepackage{graphicx}
\usepackage{epstopdf}
\usepackage{amsfonts}
\usepackage{amssymb}
\usepackage{amsmath}
\usepackage[labelfont=bf]{caption}
\usepackage{sectsty}
\usepackage{wasysym}
\usepackage{hyperref}
\usepackage[usenames,dvipsnames]{xcolor}  
\usepackage{algorithmic}
\usepackage{algorithm}

\newcommand{\N}{\mathbb{N}}
\newcommand{\Z}{\mathbb{Z}}

\newcommand{\rng}{\mathrm{r}}
\newcommand{\avg}{\mathrm{\overline{r}}}
\newcommand{\maxr}{\mathrm{r}^{\max}}
\newcommand{\diam}{\mathrm{diam}}

\newcommand{\lip}{\mathcal{L}}
\newcommand{\abs}[1]{\left|{#1}\right|}

\def\computationproblem#1#2#3{
  \begin{center}
  \begin{tabular}{rp{0.8\textwidth}}
  {\sc Problem:\enspace}&#1\\
  {\sc Input:\enspace}&#2\\
  {\sc Question:\enspace}&#3\\
  \end{tabular}
  \end{center}
}

\begin{document}

\title{Algorithmic aspects of\\ $M$-Lipschitz mappings of graphs}

\author{Jan Bok}

\institute{
Computer Science Institute, Charles University, Malostransk\'{e} n\'{a}m\v{e}st\'{i} 25, 11800, Prague, Czech Republic, email: \email{bok@iuuk.mff.cuni.cz}.
}

\maketitle

\begin{abstract}
$M$-Lipschitz mappings of graphs (or equivalently graph-indexed random walks)
are a generalization of standard random walk on $\mathbb{Z}$. For $M \in \N$,
an \emph{$M$-Lipschitz mapping} of a connected rooted graph $G = (V,E)$ is a
mapping $f: V \to \Z$ such that root is mapped to zero and for every edge
$(u,v) \in E$ we have $|f(u) - f(v)| \le M$.

We study two natural problems regarding graph-indexed random walks.
\begin{enumerate}
\item Computing the maximum range of a graph-indexed random
walk for a given graph.
\item Deciding if we can extend a partial GI random walk into
a full GI random walk for a given graph.
\end{enumerate}

We show that both these problems are polynomial-time solvable and we show
efficient algorithms for them. To our best knowledge, this is the first
algorithmic treatment of Lipschitz mappings of graphs. Furthermore, our
problem of extending partial mappings is connected to the problem of
\emph{list homomorphism} and yields a better run-time complexity for a
specific family of its instances.
 \end{abstract}

\section{Introduction}

Graph-indexed random walks (or also $M$-Lipschitz mapping of graphs) are a
generalization of standard random walk on $\mathbb{Z}$. This concept has
important connections to statistical physics, namely to gas models and
\emph{Widom–Rowlinson configurations} (as is described by Zhao
\cite{zhao2017extremal} and Cohen et al.\ \cite{cohen2017widom}). For a more
general treatment of random walks, see
\cite{lovasz1993random,haggstrom2002finite,lawler2010random}.

Graph-indexed random walks were thoroughly studied, mainly because
of the parameter of the average range, for example in \cite{benjamini2000random,erschler2009random,benjamini2000upper,wuaverage,loebl2003note}.
However, we emphasize that algorithmic aspects of graph-indexed random walks were,
by our best knowledge, not studied yet. 

\paragraph*{Applications and motivation.}

We believe that our results can be useful in determining the complexity
of computing the average range and in statistical physics.

Results on finding the maximum range provide an easy tool to determine
the extremal cases of graphs with regard to the number of $1$-Lipschitz
mappings. Furthermore, one can ask if there is some $M$-Lipschitz mapping
for a given $M$ and a given graph $G$ with $k \in \Z$ as the image of
a vertex in $G$. Results in Section 3 provide a
clear positive answer to this. We can check this in linear time.

Results on extending partial Lipschitz
mappings are motivated by the following.
\begin{itemize}
  \item Our algorithms for extending partial Lipschitz mappings provide
  faster algorithms for particular instances of list homomorphism problem
  compared to algorithms of Feder and Hell in \cite{feder1998list,feder1999list}.
  \item We often deal with incomplete or corrupted data. Finding if some given
  mapping can be a part of an $M$-Lipschitz mapping can be seen as
  a quick routine to exclude cases of clearly inconsistent data.
\end{itemize}

\section{Preliminaries}

We use the standard notation and definitions as in Diestel's monograph
\cite{diestel2000graph}. Intervals in this paper are closed intervals of integers,
if not stated otherwise.

A \emph{graph homomorphism} between digraphs $G$ and $H$ is a
mapping $f: V(G) \to V(H)$ such that for every edge $uv \in E(G)$, $f(u)f(v)
\in E(H)$. That means that graph homomorphism is an adjacency-preserving
mapping between the vertex sets of two digraphs. The set $I := \{ w \in V(H) \mid
\exists v \in V(G): f(v) = w  \}$ for a graph homomorphism $f$ is
called the \emph{homomorphic image} of $f$.

For a comprehensive and more complete source on graph homomorphisms, the
reader is invited to see \cite{hell2004graphs}. A quick introduction is given
in \cite{godsil2013algebraic} as well.

\begin{definition} \label{def:lipschitz}
For $M \in \N$, an \emph{$M$-Lipschitz mapping} of a connected graph $G = (V,E)$ with root $v_0 \in V$ is a
mapping $f: V \to \Z$ such that $f(v_0) = 0$ and for every edge $(u,v) \in E$
it holds that $|f(u) - f(v)| \le M$. The set of all $M$-Lipschitz mappings of a
graph $G$ is denoted by $\mathcal{L}_M(G)$.
\end{definition}

We strongly emphasize that we are interested in connected graphs only. Components
without the root would also allow infinitely many new $M$-Lipschitz mappings.
In case of disconnected graphs we can apply a suitable linear
transformation (for example $x \to x+1$) to images of vertices we would
get a new $M$-Lipschitz mapping.

The root is just some distinguished vertex of $G$. The reason for having
graphs rooted is that we want to have finitely many Lipschitz mappings for a
fixed graph $G$. One can reason similarly as in the case of disconnected graphs.

In literature, we will often meet a slightly different definition of
$1$-Lipschitz mappings. In it the restriction $|f(u) - f(v)| \le 1$, for all
$uv \in E$, is removed and instead, the restriction $|f(u) - f(v)| = 1$, for all
$uv \in E$, is added. In \cite{loebl2003note} authors call these mappings
\emph{strong Lipschitz mappings}. We generalize this in the following
definition.

\begin{definition}
For $M \in \N$, a \emph{strong $M$-Lipschitz mapping} of a connected graph $G = (V,E)$ with root $v_0 \in V$ is a
mapping $f: V \to \Z$ such that $f(v_0) = 0$ and for every edge $(u,v) \in E$
it holds that $|f(u) - f(v)| = M$. The set of all $M$-Lipschitz mappings of a
graph $G$ is denoted by $\mathcal{L}_{\pm M}(G)$.
\end{definition}

Note that strong $M$-Lipschitz mappings are a special case of $M$-Lipschitz mappings
of graph. We further emphasize the following lemma.

\begin{lemma} \label{lem:strongchar}
  A graph has a strong $M$-Lipschitz mapping iff it is bipartite.
\end{lemma}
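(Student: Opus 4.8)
The plan is to prove both implications directly, using a two-coloring of the vertex set as the bridge between bipartiteness and the strong Lipschitz condition.

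For the forward direction, suppose $f \in \mathcal{L}_{\pm M}(G)$. First I would argue that $f(v)$ is an integer multiple of $M$ for every $v \in V$: since $G$ is connected, each vertex is joined to the root $v_0$ by a path, and along every edge of that path the value of $f$ changes by exactly $\pm M$, so starting from $f(v_0) = 0$ the image stays in $M\Z$. I would then define a two-coloring by assigning to each vertex $v$ the parity of $f(v)/M$. Because $|f(u) - f(v)| = M$ on every edge $(u,v)$, the quantities $f(u)/M$ and $f(v)/M$ differ by exactly one, hence have opposite parities; thus every edge joins the two color classes, and $G$ is bipartite.

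For the converse, suppose $G$ is bipartite. Then $G$ admits a proper two-coloring whose color classes are the two parts $A, B$ of the bipartition, and I may take $v_0 \in A$. I would define $f(v) = 0$ for $v \in A$ and $f(v) = M$ for $v \in B$. Then $f(v_0) = 0$, and since every edge of a bipartite graph has one endpoint in each part, every edge $(u,v)$ satisfies $|f(u) - f(v)| = M$; hence $f$ is a strong $M$-Lipschitz mapping.

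The only step requiring real care is establishing that $f$ takes values in $M\Z$ in the forward direction, as this is precisely what makes the parity of $f(v)/M$ well-defined; this is where connectivity of $G$ is essential. Everything else reduces to the standard equivalence between bipartiteness and the existence of a proper two-coloring.
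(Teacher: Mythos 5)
Your proof is correct, but it takes a different route from the paper's. The paper proves only the ``only if'' direction explicitly, and does so by contradiction via the odd-cycle characterization of bipartiteness: given a strong $M$-Lipschitz mapping of a non-bipartite graph, it sums the edge increments $e_i \in \{+M,-M\}$ around an odd cycle, notes the telescoping sum is zero, and observes that an odd number of $\pm M$ terms cannot cancel. You instead argue directly: after showing (via connectivity) that the image of $f$ lies in $M\Z$, you read off a proper two-coloring from the parity of $f(v)/M$, so no odd-cycle argument or contradiction is needed. You also spell out the converse — mapping one part of the bipartition to $0$ and the other to $M$ — which the paper's written proof omits entirely (presumably regarding it as obvious). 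Your version is thus more complete and constructive; the paper's argument is terser for the single direction it covers, and its cycle-summation idea generalizes naturally to statements about increments around closed walks. Both arguments are, at heart, parity arguments, but structurally they are distinct.
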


We now define the main parameters for Lipschitz mapping of graphs.

\begin{definition}
The \emph{range} of a Lipschitz mapping $f$ of $G$ is the size of the
homomorphic image of $f$. Formally
$\rng_G(f) := \big|\{ z \in \Z\,|\,z = f(v) \textrm{ for some } v \in V(G) \}\big|$.
\end{definition}

\begin{definition} \emph{(Average range)}
The \emph{average range} of graph $G$ over all $M$-Lipschitz mappings is defined as
$$\avg_M(G) := \frac{\sum_{f \in \mathcal{L}_M(G)} \rng(f)}{|\mathcal{L}_M(G)|}.$$
\end{definition}

We can view this quantity as the expected size of the homomorphic image of an
uniformly picked random $M$-Lipschitz mapping of $G$.

\begin{definition} \emph{(Maximum range)} \label{def:maximum_range}
The \emph{maximum range} over all $M$-Lipschitz mappings of graph $G$ is defined as
$$\maxr_M(G) = \max_{f \in \mathcal{L}_M(G)} \rng(f).$$
\end{definition}

Whenever we want to talk about the counterparts of these definitions for
strong Lipschitz mappings, we denote it with $\pm$ in subscript. For example,
$\avg_{\pm M}$ is the average range of strong $M$-Lipschitz mapping of graph.

Whenever we write average range or maximum range without saying which
$M$-Lipschitz mappings we use, it should be clear from the context what $M$ do
we mean.

It is worth noting that for computing the average range and the maximum range,
the choice of root does not matter. That is why we often omit the details of
picking a root.

\paragraph*{Connection to graph homomorphisms.}
$M$-Lipschitz mappings map graph vertices to integers. There is a
natural bijection between $M$-Lipschitz mappings and graph homomorphisms to a
suitable graph associated with $\Z$. Consider a graph $Z_M$ with the vertex set
$V(Z_M)=\Z$ and the edge set $E(Z_M) = \{ij: \abs{i - j} \leq
M \}$. Every $M$-Lipschitz mapping corresponds to a graph homomorphism to $Z_M$.

We can define a graph $Z_{\pm M}$ analogously for strong mappings.
Note that in the case of strong $1$-Lipschitz mappings,
we get that they correspond to homomorphisms to a two-way infinite path and in the case of $1$-Lipschitz mappings,
we get a correspondence to homomorphisms to a two-way infinite path with loops added to each vertex.
See Figure \ref{fig:homtoz} for an example of such
homomorphism of $C_4$.

\begin{figure}
\centering
\includegraphics[scale=0.4]{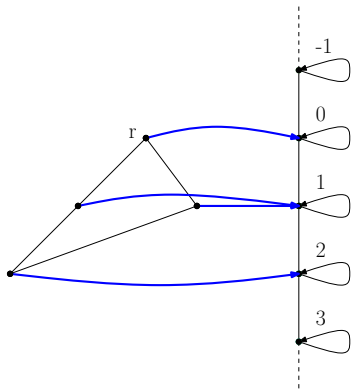}
\caption{A homomorphism of $C_4$ rooted in $r$ to $Z_1$ graph. In fact, this homomorphism
is a $1$-Lipschitz mapping of $C_4$.}
\label{fig:homtoz}
\end{figure}

\paragraph*{Gas models and physical motivation.}

A~homomorphism from G to $P_3$ with loops added to
each vertex corresponds to a partial (not necessarily proper) coloring of the
vertices of G with red or blue, allowing vertices to be left ``uncolored''
such that no red vertex is adjacent to a blue vertex. This coloring is known
as the \emph{Widom–Rowlinson configuration} \cite{widom1970new} 
Observe that Widom-Rowlinson configuration
corresponds to a $1$-Lipschitz mapping with the size of the homomorphic image
at most 3. See Figure \ref{fig:model} for an example.

Widom-Rowlinson configurations have a physical interpretation. Consider
particles of a gas $B$ (blue vertices) and of a gas $R$ (red vertices).
W-R configurations then model situations in which particles of gases $A$ and
$B$ do not interact. This model is sometimes referred to as the \emph{
hard-core model}. The name emphasizes the hard restriction that
particles of gases cannot be directly adjacent, i.e.\ their molecules do not
interact.

\begin{figure}[h!]
\centering
\includegraphics[scale=0.7]{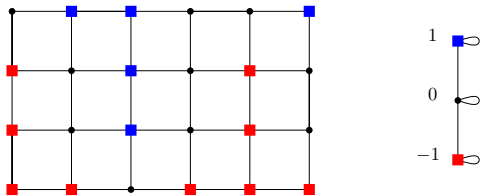}
\caption{An example of Widom–Rowlinson configuration for a grid graph.}
\label{fig:model}
\end{figure}

\section{Maximum range} \label{chap:maximum_range}

In this section we will show how can we algorithmically compute the maximum range
of a given graph.
Also, we will show the relation of this parameter to other existing results.

\subsection{Diameter}

In this section we observe one important fact giving us an upper bound on the
range of a graph. Then we will show that this upper bound is tight.

We will first prove an important, yet easy lemma.

\begin{lemma} \label{lem:diam}
For any connected graph $G$ with $\diam(G)$ and every $M$-Lipschitz mapping
$f$ of $G$, holds that $\rng(f) \le M \cdot (\diam(G) + 1).$
\end{lemma}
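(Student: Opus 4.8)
The plan is to bound the number of distinct integer values attained by $f$ by the length of the smallest interval of integers containing the image, and then to control that length using the Lipschitz condition along a shortest path joining the two vertices on which $f$ is extremal. The essential observation is that one should measure the spread between these two extremal vertices, not between the root and the farthest vertex.

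First I would let $u$ be a vertex maximizing $f$ and $w$ a vertex minimizing $f$ over all of $V(G)$. Since the homomorphic image of $f$ is a set of integers contained in the interval $[f(w), f(u)]$, the number of distinct values cannot exceed the number of integers in that interval, giving the elementary estimate $\rng(f) \le f(u) - f(w) + 1$.

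Next I would bound the spread $f(u) - f(w)$ using connectedness and the Lipschitz property. Because $G$ is connected there is a shortest path $u = x_0, x_1, \ldots, x_k = w$ with $k = d(u,w) \le \diam(G)$. For each edge on this path the Lipschitz condition yields $\abs{f(x_{i+1}) - f(x_i)} \le M$, so by the triangle inequality $\abs{f(u) - f(w)} \le \sum_{i=0}^{k-1} \abs{f(x_{i+1}) - f(x_i)} \le Mk \le M \cdot \diam(G)$. Combining this with the previous step gives $\rng(f) \le M \cdot \diam(G) + 1$, and since $M \ge 1$ we conclude $\rng(f) \le M \cdot \diam(G) + 1 \le M \cdot (\diam(G) + 1)$, as claimed.

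I do not expect a genuine obstacle here, as the argument is short; the one point that must be handled correctly is the choice to compare the two extremal vertices directly rather than bounding each vertex against the root. The naive root-based argument only shows that every value lies in $[-M\diam(G), M\diam(G)]$, which yields the strictly weaker bound $2M\cdot\diam(G)+1$ and would not establish the lemma.
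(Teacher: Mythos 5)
Your proof is correct and takes essentially the same approach as the paper's: both bound the difference of $f$ at two vertices spanning the image by $M$ times their distance along a shortest path, the paper merely casting this as a proof by contradiction rather than arguing directly from the two extremal vertices. Your direct version in fact establishes the sharper bound $\rng(f) \le M \cdot \diam(G) + 1$, weakened to the stated one only via $M \ge 1$; this sharper inequality is worth keeping, since for $M \ge 2$ it shows the bound $M \cdot (\diam(G) + 1)$ is never attained, so the equality asserted in Theorem~\ref{thm:diam} can only hold when $M = 1$.
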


Now we show that we can always construct a mapping where equality holds and thus
we conclude that the diameter and the maximum range are tightly connected.

\begin{theorem} \label{thm:diam}
For any connected graph $G$, $\maxr_M(G) = M \cdot (\diam(G) + 1)$.
\end{theorem}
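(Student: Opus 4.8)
The plan is to complement the upper bound of Lemma~\ref{lem:diam} with a matching construction: for an arbitrary connected $G$ I would exhibit a single $M$-Lipschitz mapping $f$ whose range attains $M\cdot(\diam(G)+1)$, so that equality with the upper bound follows at once. Since the maximum range does not depend on the choice of root, I am free to root $G$ wherever it is convenient. I would fix a diametral pair $u,w$, i.e.\ vertices with $\mathrm{dist}(u,w)=\diam(G)$, root the graph at $u$, and fix one shortest $u$--$w$ path $P=(u=p_0,p_1,\dots,p_{\diam(G)}=w)$. The point of this choice is that the eccentricity of $u$ equals $\diam(G)$, so distances from $u$ already sweep through every integer from $0$ up to $\diam(G)$.

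First I would settle the base case $M=1$, where the construction is cleanest: set $f(v):=\mathrm{dist}(u,v)$. This is $1$-Lipschitz because adjacent vertices differ in their distance from $u$ by at most $1$, and $f(u)=0$ as required. The image equals $\{0,1,\dots,\diam(G)\}$, since every intermediate distance is realized by the internal vertices of $P$; hence $\rng(f)=\diam(G)+1$, exactly the bound. For general $M$ I would build on the same layering by the distance function: let $f$ increase by $M$ across consecutive distance layers so that $f(p_i)=Mi$ along $P$, and then recruit the remaining vertices, together with the slack permitted by the Lipschitz condition, to realize every integer lying strictly between consecutive multiples of $M$. Checking that the resulting $f$ is $M$-Lipschitz is routine, as all increments are dictated by the BFS layer structure and are bounded by $M$.

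The step I expect to be the main obstacle is the \emph{counting} of the range, namely proving that the image is a contiguous block of the full size $M\cdot(\diam(G)+1)$ rather than merely the sparse set $\{0,M,2M,\dots,M\cdot\diam(G)\}$ of range $\diam(G)+1$. This is where one must exploit the geometry of geodesics and the availability of vertices in each layer to fill in the intermediate values; establishing that no integer of the target interval is skipped is the heart of the argument. Once the image is shown to contain every integer in that interval, the lower bound $\rng(f)\ge M\cdot(\diam(G)+1)$ is immediate, and combining it with the upper bound of Lemma~\ref{lem:diam} yields $\maxr_M(G)=M\cdot(\diam(G)+1)$.
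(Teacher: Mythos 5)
Your $M=1$ case is complete and is exactly the paper's construction: root at a diametral vertex $u$, set $f(v) := d(u,v)$, and note the image is $\{0,1,\dots,\diam(G)\}$, meeting the bound of Lemma~\ref{lem:diam}. The genuine gap is the general case $M \ge 2$: you explicitly defer what you yourself call ``the heart of the argument'' --- recruiting vertices of the intermediate layers to realize every integer strictly between consecutive multiples of $M$ --- and never carry it out. As written, your construction only guarantees the values $\{0, M, 2M, \dots, M\cdot\diam(G)\}$, i.e.\ at most $\diam(G)+1$ distinct values, far short of the claimed $M\cdot(\diam(G)+1)$.

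Moreover, this gap cannot be closed, because the filling-in you plan is impossible once $M \ge 2$. Every $M$-Lipschitz mapping satisfies $|f(x)-f(y)| \le M\cdot d(x,y) \le M\cdot\diam(G)$ for all vertices $x,y$, so the image of $f$ lies in an interval containing only $M\cdot\diam(G)+1$ integers, which is strictly less than $M\cdot(\diam(G)+1)$ for $M\ge 2$; a contiguous block of the size you are aiming for simply cannot occur in the image. (Concretely, for $G=K_2$ and $M=2$ the theorem would require four distinct values on two vertices.) In identifying this obstacle you have actually put your finger on the defect in the paper's own proof: the paper takes $f(v) := M\cdot d(r,v)$ and asserts its range is at least $M\cdot(\diam(G)+1)$, yet under the paper's definition of range (the cardinality of the image) that mapping attains only multiples of $M$, hence at most $\diam(G)+1$ values. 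Both your sketch and the paper's argument are valid only for $M=1$, where images of Lipschitz mappings of connected graphs are genuine intervals of $\Z$; for $M\ge 2$ the cardinality of the image of any $M$-Lipschitz mapping is at most $\min\{|V(G)|,\, M\cdot\diam(G)+1\}$, so the theorem as stated is false, and no completion of your construction (or the paper's) can rescue it.
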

\begin{proof}
From the definition of the diameter, there must exist vertices $u_1$ and $u_2$ such that their distance
is equal to $\diam(G)$. Without loss of generality we set $r := u_1$.
Now let us define the mapping $f: V(G) \to \Z$ so that for every $v \in V$ we have $f(v) := M \cdot d(r,v)$.

We see that $f(r)=0$, and $f(u_2) = M \cdot d(r,u_2)$ so the image of the
shortest path
connecting $u_1$ and $u_2$ has the size $\diam(G) + 1$. On the top of that,
for every $uv \in E(G)$, $|f(u)-f(v)| \le M$, otherwise we would get a
contradiction with the definition of the distance. Thus $f$ is an $M$-Lipschitz
mapping and its range has to be at least $M \cdot (\diam(G) + 1)$. Combining
this with Lemma \ref{lem:diam}, we get the claim weT wanted to prove.
\qed \end{proof}

\subsection{The case of strong mappings}

By Lemma \ref{lem:strongchar} we showed that strong Lipschitz mappings can
exist on bipartite graphs only. We will now extend Theorem \ref{thm:diam}.

\begin{theorem} \label{thm:diamstrong}
For any bipartite connected graph $G$, $\maxr_{\pm M}(G) = M \cdot (\diam(G) + 1)$.
\end{theorem}

\subsection{Applications}

We will apply our results to prove Theorem \ref{thm:number} and subsequently
to prove extremal behavior on the number of Lipschitz mappings of a graph.

\begin{theorem} \label{thm:number}
For every connected graph $G=(V,E)$ and for every two vertices
$a,b \in V$ such that $ab \not\in E$, holds that
$$\abs{\lip_1(G)} \ge \abs{\lip_1(G \cup \{a,b \})}.$$
\end{theorem}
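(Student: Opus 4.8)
The plan is to observe that $\lip_1$ is cut out entirely by a system of local constraints — one Lipschitz inequality per edge — together with the single global condition $f(v_0)=0$ fixing the root. Adding an edge can only enlarge this constraint system, hence can only shrink the solution set. Concretely, I would prove the set inclusion $\lip_1(G \cup \{a,b\}) \subseteq \lip_1(G)$ and then pass to cardinalities. Note first that $G \cup \{a,b\}$ is still connected, since we only add an edge to an already connected graph, and it is rooted at the same $v_0$; so Definition \ref{def:lipschitz} applies to both graphs with the same root and the comparison is well posed.

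For the inclusion, I would take an arbitrary $f \in \lip_1(G \cup \{a,b\})$. By definition $f(v_0)=0$ and $|f(u)-f(v)| \le 1$ for every edge $(u,v)$ of $G \cup \{a,b\}$. Since $E(G) \subseteq E(G)\cup\{ab\} = E(G \cup \{a,b\})$, the Lipschitz inequality in particular holds on every edge of $G$; combined with $f(v_0)=0$ this is exactly the requirement for membership in $\lip_1(G)$. Thus every element of $\lip_1(G \cup \{a,b\})$ already lies in $\lip_1(G)$.

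Both sets are finite, as rooting a connected graph guarantees only finitely many $1$-Lipschitz mappings (as recorded in the preliminaries), so the inclusion immediately gives $\abs{\lip_1(G \cup \{a,b\})} \le \abs{\lip_1(G)}$, which is the claim.

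I do not expect a genuine obstacle here: the only points to verify are that connectivity and the root are preserved and that the edge set grows, all of which are immediate. I would also remark that the hypothesis $ab \notin E$ merely makes the statement meaningful, since otherwise $G \cup \{a,b\} = G$ and equality holds trivially, and that the identical subset argument proves the analogous monotonicity for $\lip_M$ for every $M$, not just $M=1$. The paper-specific content presumably lies in the following step, where this monotonicity is combined with the maximum-range results of Theorem \ref{thm:diam} to identify the extremal graphs.
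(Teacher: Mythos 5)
Your proof is correct for the theorem as stated, and it takes a genuinely more elementary route than the paper. The inclusion $\lip_1(G \cup \{a,b\}) \subseteq \lip_1(G)$ holds exactly as you argue: every defining constraint of $\lip_1(G)$ (the root condition plus one inequality per edge of $G$) is among the constraints defining $\lip_1(G \cup \{a,b\})$, and connectivity and the root are preserved, so the cardinality bound follows. The paper, by contrast, treats this inclusion as a one-line observation (``Clearly, $\lip_1(G \cup ab) \subseteq \lip_1(G)$'') and spends its effort showing the inclusion is \emph{proper}: it invokes the Cherry lemma (Lemma \ref{lem:cherry}) to locate an induced $K_{1,2}$, takes $a,b$ to be its non-adjacent pair, roots $G$ at $a$, and uses the distance construction from Theorem \ref{thm:diam} to produce a mapping $f$ with $f(b) = d(a,b) = 2$, which violates the Lipschitz condition on the new edge $ab$. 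This yields the strictly stronger conclusion $\abs{\lip_1(G \cup ab)} \le \abs{\lip_1(G)} - 1$. So the trade-off is: your argument is shorter, needs neither Lemma \ref{lem:cherry} nor Theorem \ref{thm:diam}, handles arbitrary non-adjacent $a,b$ directly (the paper's proof, read literally, re-chooses $a,b$ at distance exactly $2$), and, as you note, generalizes verbatim to every $M$; what it cannot deliver is strictness, since monotonicity alone is consistent with the two sets being equal. The non-strict inequality suffices for the theorem as stated and for the corollary as literally stated (trees attain the maximum, $K_n$ the minimum), but the strict decrease is the actual content of the paper's proof --- it lives inside this proof rather than in the corollary, where you guessed it would be.
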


We will use the Cherry lemma for the proof.

\begin{lemma}[Cherry lemma] \label{lem:cherry} 
  A graph $G$ is a disjoint union of complete graphs if and only if it
  does not contain $K_{1,2}$ as an induced subgraph.
\end{lemma}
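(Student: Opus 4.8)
The plan is to prove both directions, with the forward implication essentially immediate and the substance lying in the converse. Throughout, recall that an induced $K_{1,2}$ (a \emph{cherry}) is a triple of vertices $a,b,c$ with $ab, ac \in E$ but $bc \notin E$.

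For the forward direction I would argue by contradiction: if $G$ were a disjoint union of complete graphs yet contained a cherry $a,b,c$, then the edges $ab$ and $ac$ place both $b$ and $c$ in the same component as $a$, and that component is complete, so $bc \in E$ — contradicting $bc \notin E$. Hence a disjoint union of complete graphs is cherry-free.

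For the converse, my key step would be to show that the absence of an induced $K_{1,2}$ makes adjacency transitive. I would define a relation $\sim$ on $V(G)$ by setting $u \sim v$ whenever $u = v$ or $uv \in E$; this is reflexive and symmetric by construction. To verify transitivity, take distinct $u,v,w$ with $uv, vw \in E$: were $uw \notin E$, the set $\{u,v,w\}$ would induce a $K_{1,2}$ centred at $v$, which is forbidden, so $uw \in E$ and $u \sim w$. Thus $\sim$ is an equivalence relation.

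Finally I would identify the equivalence classes of $\sim$ with the connected components of $G$ and check that each is complete. Two related vertices share an edge (or coincide), hence lie in a common component; conversely, walking along a path inside a component and applying transitivity repeatedly shows any two of its vertices are related, so the classes are exactly the components. Since distinct related vertices are adjacent, every component is complete, exhibiting $G$ as a disjoint union of complete graphs. I expect the transitivity step to be the only genuine obstacle, and even that is resolved the instant one observes that a failure of transitivity is precisely a cherry centred at the middle vertex; the remainder is routine bookkeeping about components.
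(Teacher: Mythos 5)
Your proof is correct. Note that the paper itself states the Cherry lemma without any proof at all --- it is treated as a known folklore fact and does not appear even in the appendix of omitted proofs --- so there is no argument of the paper to compare yours against. Your argument is the standard one: the forward direction is immediate from completeness of components, and for the converse you observe that cherry-freeness makes the relation ``equal or adjacent'' transitive, hence an equivalence relation whose classes are precisely the connected components, each of which is then a clique. The only point requiring any care, handling the degenerate cases in the transitivity check where the three vertices are not distinct, is trivial and your restriction to distinct $u,v,w$ covers the genuine case; the rest is, as you say, bookkeeping.
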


Now we can prove Theorem \ref{thm:number}.

\begin{proof}[Proof of Theorem \ref{thm:number}]
  The graph $G$ cannot be a complete graph. Therefore, by Lemma \ref{lem:cherry},
  induced $K_{1,2}$ exists in $G$. Let vertices $a$ and $b$ from the
  statement of Theorem \ref{thm:number} be the two non-adjacent vertices of induced $K_{1,2}$.

  We see that $G$ has the diameter at least $2$, since $a$ and $b$ are in distance
  $2$. Let us root $G$ in $a$ for auxiliary reasons.   By the construction of $1$-Lipschitz mapping from Theorem \ref{thm:diam}, there must exist a mapping $f$ with $f(b) = d(a,b) = 2$.

  Clearly, $\lip_1(G \cup ab) \subseteq \lip_1(G)$. However, $f$ cannot be
  a $1$-Lipschitz mapping of $G \cup ab$ rooted in $a$. That implies 
  $|\lip_1(G \cup ab)| \leq |\lip_1(G)| - 1$.
\qed \end{proof}

Theorem \ref{thm:number} further implies the following theorem, giving extremal
graphs with respect to the number of Lipschitz mappings.

\begin{corollary}
  Among connected graphs of order $n$, trees have the maximum
  number of $1$-Lipschitz mappings and a complete graph $K_n$ has the
  minimum number of $1$-Lipschitz mappings.
\end{corollary}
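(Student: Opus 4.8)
The plan is to use Theorem \ref{thm:number} as a monotonicity tool: since inserting an edge between two non-adjacent vertices never increases the number of $1$-Lipschitz mappings, the extremal graphs should be the sparsest connected graph of order $n$ (a tree) and the densest ($K_n$). To turn this into a clean statement I also need to know the exact count for trees, so that ``trees have the maximum'' makes sense as a common value.

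First I would compute $\abs{\lip_1(T)}$ for an arbitrary tree $T$ on $n$ vertices. Root $T$ at $v_0$ with $f(v_0)=0$ and process the remaining vertices in an order in which each vertex follows its parent (a BFS or DFS order from the root). When a vertex $v$ is assigned, its parent $u$ already has a fixed value and the sole constraint is $|f(v)-f(u)|\le 1$, giving exactly the three choices $f(v)\in\{f(u)-1,\,f(u),\,f(u)+1\}$; because $T$ is acyclic, no later vertex imposes any further constraint on $v$. Hence $\abs{\lip_1(T)} = 3^{\,n-1}$, independently of the shape of $T$, so every tree of order $n$ realises the same count.

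For the upper extreme, let $G$ be any connected graph of order $n$ and let $T$ be a spanning tree of $G$. The graph $G$ is obtained from $T$ by inserting its non-tree edges one at a time; at every step the intermediate graph is connected, so Theorem \ref{thm:number} applies and the count cannot grow. Therefore $\abs{\lip_1(G)} \le \abs{\lip_1(T)} = 3^{\,n-1}$, which shows that trees maximise the number of $1$-Lipschitz mappings. For the lower extreme I would again take $G$ connected of order $n$ and add the missing edges one at a time, keeping the graph connected at every step and ending at $K_n$; repeated use of Theorem \ref{thm:number} then yields $\abs{\lip_1(K_n)} \le \abs{\lip_1(G)}$, so $K_n$ minimises the count.

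The only delicate point, and the place where the argument could quietly go wrong, is the tree count. The hard part will be justifying that the three per-edge choices are genuinely independent across the whole tree; this is precisely where acyclicity is essential, since a single cycle would couple the choices and break the clean $3^{\,n-1}$ product. I would also make explicit that the root-fixing convention $f(v_0)=0$ is what makes $\abs{\lip_1}$ a well-defined finite quantity, and note that the choice of spanning tree is immaterial because all trees of a given order share the same count.
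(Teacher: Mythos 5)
Your proposal is correct and follows essentially the route the paper intends: the paper states the corollary as an immediate consequence of Theorem \ref{thm:number}, and your argument is exactly that monotonicity applied along a spanning tree (for the maximum) and along an edge-completion sequence to $K_n$ (for the minimum), with all intermediate graphs connected so the theorem applies at every step. Your additional computation that $\abs{\lip_1(T)} = 3^{\,n-1}$ for every tree $T$ of order $n$ is a worthwhile refinement the paper omits, since it shows all trees attain a common maximum value rather than merely that no connected graph exceeds its own spanning trees.
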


\subsection{Algorithmic aspects}

Let us consider the following algorithmic problems -- \textsc{$M$-MaxRange} and
\textsc{$M$-Strong-MaxRange}.

\computationproblem
{Maximum range problem -- \textsc{$M$-MaxRange}}
{A connected graph $G$.}
{What is the maximum range of $M$-Lipschitz mapping of
$G$, i.e.\ $\maxr_M(G)$?}

The problem \textsc{$M$-Strong-MaxRange} can be defined similarly.

Because of Theorem \ref{thm:diam}, we can use the existing algorithms for finding
graph diameter and distance in graphs for both of these problems. 
We can achieve an even better
complexity for some classes. Take for example the class of trees in which we can compute
diameter by a linear-time algorithm using one clever depth-first search traversal.

\section{Extending partial Lipschitz mappings} \label{chap:extending}

While studying Lipschitz mappings of graphs we came up with an algorithmic problem which
falls into widely studied paradigm of a partial structure extension. We give
two examples of such problems to show a broader context.

\subsection{Related problems}

\paragraph*{Precoloring extension.}
The following problem was introduced in the series of papers \cite{biro1992precoloring,hujter1993precoloring,hujter1996precoloring}.

\computationproblem
{\textsc{Precoloring Extension}}
{An integer $k \geq 2$, a graph $G=(V,E)$ with $|V|\geq k$, a vertex
subset $W \subseteq V$, and a proper $k$-coloring of $G_W$.}
{Can this $k$-coloring be extended to a proper $k$-coloring of the whole
graph $G$?}

To current date, more than twenty papers on the precoloring extension problem were
published. No up-to-date survey is available, but Daniel Marx gathers
an unofficial list of relevant papers on his webpage \url{http://www.cs.bme.hu/~dmarx/prext.php}.



\paragraph*{The partial representation extension problem.}
The reader surely
knows a planar drawing of graph. A particular drawing of the underlying graph can be
seen as one of the possible representations. Studying the representations of various graph
classes is a wide area of graph theory and we refer reader to the comprehensive
monograph of Spinrad \cite{spinrad2003efficient}. One can ask for a given
graph $G$ and some partial representation $R'$ of $G$ if it can be extended to some
full representation $R$ of $G$ such that $R' \subseteq R$. This problem was studied
for various graph classes, see PhD thesis of Klavík
\cite{klavikphd} for a survey.

\subsection{Definition of our problem}

We will define two similar problems in the setting of integer homomorphisms.

\computationproblem
{Partial $M$-Lipschitz mapping extension - \textsc{$M$-LipExt}}
{A connected graph $G=(V,E)$, a subset $V' \subseteq V$ with a function $f': V' \to \Z$.}
{Does there exist an $M$-Lipschitz mapping $f$ of $G$ such that $f' \subseteq f$?}

The problem \textsc{Strong $M$-LipExt} can be defined similarly.

If the answer for a given instance of \textsc{$M$-LipExt} (or
\textsc{Strong $M$-LipExt}) is YES, we say that $f'$ is \emph{extendable} for the
given $G$ and the given type of problem. We often say only that $f'$ is extendable when it is clear from the
context which of these two problems we are trying to solve.

See Figure \ref{fig:digraph} for an initial example. This mapping cannot
be extended to a $1$-Lipschitz mapping but it can be extended to an $L$-Lipschitz
mapping for every $L \geq 2$.

\begin{figure}
\centering
\includegraphics[scale=0.7]{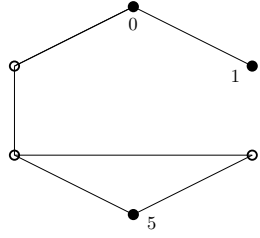}
\caption{An example of a partial mapping with three prescribed vertices.}
\label{fig:digraph}
\end{figure}

\subsection{Partial non-strong $M$-Lipschitz mappings}

\paragraph*{Polynomiality.}

We will show that \textsc{$M$-ParExt} can be polynomially reduced to a tractable instance
of list homomorphism problem.

\computationproblem
{List homomorphism problem - \textsc{LHom($H$)}}
{A graph $G$ and a list function $L: V(G) \to 2^{V(H)}$.}
{Does there exist an homomorphism $f: G \to H$ such that $f(v) \in L(v)$ for every
$v \in V(G)$?}

\begin{theorem}\label{thm:parext}
  The problem \textsc{$M$-ParExt} is polynomial-time solvable.
\end{theorem}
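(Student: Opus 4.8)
The plan is to realize $M$-Lipschitz mappings as graph homomorphisms into the integer graph $Z_M$ and to exploit the fact that, on a connected graph with a fixed root, the image of every vertex is confined to a bounded interval of integers. Concretely, I would first observe that if $f$ is any $M$-Lipschitz mapping of $G$, then applying the Lipschitz condition along a shortest root-path gives $\abs{f(v)} \le M \cdot d(v_0,v) \le M \cdot \diam(G)$ for each $v \in V$; this is precisely the reasoning underlying Lemma~\ref{lem:diam}. Hence every candidate extension takes values in the finite interval $I = [-M\diam(G),\, M\diam(G)]$, whose size is at most $2M\diam(G)+1 = O(M\abs{V})$ and therefore polynomial in the input.

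Building on this, I would construct a single finite list-homomorphism instance. Let $H$ be the subgraph of $Z_M$ induced by $I$, so that $V(H) = I$ and $ij \in E(H)$ iff $\abs{i-j} \le M$. For the lists, set $L(v) = \{f'(v)\}$ if $v \in V'$, set $L(v_0) = \{0\}$ to encode the root constraint (checking consistency if $v_0 \in V'$), and set $L(v) = I$ otherwise; any prescribed value lying outside $I$ immediately certifies a NO-instance. I would then verify the equivalence in both directions: an $M$-Lipschitz extension of $f'$ restricts to a homomorphism $G \to H$ respecting the lists by the value bound above, and conversely any list homomorphism $f \colon G \to H$ satisfies $\abs{f(u)-f(v)} \le M$ on every edge, maps $v_0$ to $0$, and agrees with $f'$ on $V'$, hence is exactly the desired extension. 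This reduces \textsc{$M$-ParExt} to one instance of \textsc{LHom($H$)} in polynomial time.

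The main obstacle is to show that this particular \textsc{LHom($H$)} instance is tractable, since \textsc{LHom($H$)} is NP-complete for general targets $H$. Here I would use the structure of $H$: every vertex $i \in I$ carries a loop (as $\abs{i-i} = 0 \le M$), so $H$ is reflexive, and assigning to $i$ the integer interval $[i,\, i+M]$ shows that $i$ and $j$ are adjacent precisely when these intervals intersect. Thus $H$ is a (reflexive) interval graph, hence a bi-arc graph, which places it in the polynomial-time solvable class of the list-homomorphism dichotomy of Feder and Hell \cite{feder1998list,feder1999list}. Combining the polynomial-size reduction with the polynomial-time algorithm for \textsc{LHom($H$)} on interval targets yields the claimed polynomiality; I expect the care needed in verifying the interval (bi-arc) property of the truncated $Z_M$, and in treating $M$ as part of the input versus a fixed parameter, to be the only delicate points.
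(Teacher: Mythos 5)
Your proposal follows essentially the same route as the paper: reduce \textsc{$M$-ParExt} to a single \textsc{LHom($H$)} instance whose target is a truncated $Z_M$ (singleton lists on $V'$, full lists elsewhere), and invoke the Feder--Hell tractability of list homomorphism for reflexive interval targets. If anything, your write-up fills in details the paper's sketch leaves implicit, namely the explicit interval representation $[i,i+M]$ certifying that the target is an interval graph, and the handling of the root constraint via $L(v_0)=\{0\}$.
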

\begin{proof}{{(Sketch.)}}
Without loss of generality, assume that $f'(V') \subseteq [-n,n]$. We define $n:=|V(G)|$. For a given instance of \textsc{$M$-ParExt} instance, build a graph $T$ with $V(T):=\{-n,\ldots,0,\ldots n\}$
and $E(T):= \{ (a,b):  -n \le a \le b \le n \wedge |a-b| \le M \}$. If $v \in V'$,
set $L(v) := \{f'(v)\}$ and if $w \in V\setminus V'$, set $L(w) := V(T)$. Now observe
that answer for \textsc{LHom($T$)} with $G$ on input is positive if and only if
 \textsc{$M$-ParExt} on $G$ has a positive answer.
  
  Feder and Hell \cite{feder1998list} proved that if $H$
  has a loop on each vertex and $H$ is an interval graph, then
\textsc{LHom($H$)} is tractable. Hence what remains is to prove that $T$ is an interval graph. Our result then follows.
\qed \end{proof}

Note that the algorithm in \cite{feder1998list} runs in $O(|V|^4)$ for the case
of \textsc{LHom($Z_M$)}
Our goal in the remaining text will be to show more efficient algorithms for
these instances. We note
that our approach will be also constructive, as is the algorithm in \cite{feder1998list}, 

\paragraph*{Trees.} 

The goal of this part of the paper is to show that we can solve
\textsc{$M$-ParExt} in quadratic time and linear space with
a special algorithm. We will now prove the correctness and the complexity of this algorithm.

\begin{algorithm}[h]
\caption{Algorithm for \textsc{$M$-ParExt} on trees.}
\label{alg:trees}
\begin{algorithmic}[1]
\REQUIRE A tree graph $G$, a vertex set $V' \subseteq V(G)$, and a partial $M$-Lipschitz mapping $f': V' \to \Z$.
\smallskip

\STATE Check if $|f'(v)-f'(u)| \leq M$ for all $u,v \in V'$. If not, $f'$ cannot be extended.
\STATE Set $P(v) := [f'(v),f'(v)]$ for every $v \in V'$, 
\STATE Set $P(v) := [-\infty,\infty]$ for every $v \in V(G)\setminus V'$.
\smallskip
\FOR{every $v'$ in $V'$}
  \STATE Start the DFS on $G$ from $v'$.
  \STATE In DFS, when you process vertex $v$ with $P(v) = [\underline{P}(v),\overline{P}(v)]$, do the following:
    \FOR{every $w \in N_G(v)$}
    \STATE $P(w) := [\underline{P}(v) - M, \overline{P}(v) + M] \cap P(w)$.
    \ENDFOR
\ENDFOR
\smallskip
\STATE Find $r \in V(G)$ such that $0 \in P(v)$ and re-run DFS from Line 5 with $v' = r$.
\IF{no such r exists}
\RETURN The mapping $f'$ cannot be extended.
\ENDIF
\STATE Set $f(r) := 0$.
\smallskip
\IF{$P(v) = \emptyset$ for some $v \in V(G)$}
\RETURN The mapping $f'$ cannot be extended.
\ENDIF
\smallskip
\STATE Launch the BFS from $r$ and for every visited vertex $v \neq r$, set $f(v)$ so that for parent vertex $p$, $f(v) \in [f(p)-M,f(p)+M]$ holds.
\smallskip
\IF{the previous BFS could not be completed}
\RETURN \FALSE
\ENDIF
\smallskip
\RETURN \TRUE
\end{algorithmic}
\end{algorithm}

\begin{lemma}[Correctness]\label{lem:treescorrect}
  Algorithm \ref{alg:trees} is correct. It finds an $M$-Lipschitz
  mapping $f$ that extends $f'$ if and only if $f'$ is extendable.
\end{lemma}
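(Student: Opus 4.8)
The plan is to prove correctness by showing two things: that the interval-propagation phase computes, for every vertex $v$, exactly the set $P(v)$ of integer values that are \emph{locally feasible} in the sense that $P(v)$ equals the intersection over all prescribed vertices $v' \in V'$ of the constraint $[f'(v') - M\cdot d(v',v),\, f'(v') + M\cdot d(v',v)]$; and then that, on a \emph{tree}, these per-vertex intervals are not merely necessary but jointly sufficient, so that a consistent assignment can always be reconstructed greedily by the final BFS. The key structural fact driving the whole argument is that in a tree there is a unique path between any two vertices, so the only constraints coupling $f(v)$ and $f(v')$ come from the Lipschitz condition propagated along that single path; there are no cycles that could create additional, non-local obstructions.

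First I would verify the propagation invariant. After the DFS from a single source $v'$, a straightforward induction on the DFS tree distance shows that each vertex $w$ at distance $d$ from $v'$ receives the constraint $P(w) = [f'(v') - Md,\, f'(v') + Md] \cap (\text{old } P(w))$, because the update in Line~8 widens the interval of the parent by exactly $M$ on each side. Running this from every $v' \in V'$ and intersecting yields $P(v) = \bigcap_{v' \in V'} [f'(v') - M\,d(v',v),\, f'(v') + M\,d(v',v)]$. I would then argue the \emph{necessity} direction of the correctness claim: if $f'$ is extendable by some $M$-Lipschitz $f$, then for each $v$ and each $v'$ the telescoping bound $|f(v) - f'(v')| \le M\,d(v',v)$ forces $f(v) \in P(v)$; hence every $P(v)$ is nonempty and contains a valid value, so the algorithm cannot wrongly reject. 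In particular some vertex $r$ with $0 \in P(r)$ exists (namely any vertex, since an extension maps the root to $0$ and the root itself witnesses $0 \in P(r)$), so Line~11 succeeds.

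For the converse (\emph{sufficiency}), I would show that when no $P(v)$ is empty after the final propagation rooted at $r$, the BFS of Line~19 always completes. The crucial point is that for a vertex $v$ with already-assigned parent $p$, the set of admissible values $[f(p)-M, f(p)+M] \cap P(v)$ is nonempty. This I would establish using the tree structure: because $f(p) \in P(p)$ and the interval $P(v)$ was tightened by the parent's interval exactly through the edge $pv$, the two integer intervals $[f(p)-M,f(p)+M]$ and $P(v)$ must overlap. Concretely, both $\underline{P}(v) \le f(p)+M$ and $\overline{P}(v) \ge f(p)-M$ hold, which is what guarantees an integer in the intersection; here it matters that all endpoints are integers and $M \in \N$, so no parity or rounding gap appears. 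Iterating the BFS thus produces a total $M$-Lipschitz extension agreeing with $f'$ on $V'$ and sending $r$ (hence, after the allowed translation, the designated root) to $0$.

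I expect the main obstacle to be the sufficiency step, specifically proving that the greedy BFS never gets stuck -- i.e.\ that the locally propagated intervals are globally consistent. The subtlety is that $P(v)$ encodes constraints coming from \emph{all} prescribed vertices simultaneously, and one must check that choosing $f(v)$ compatibly with the parent does not later violate a constraint originating from a different $v'$ on the other side of the tree. The clean resolution is again acyclicity: any such distant constraint reaches $v$ only through $p$, so it is already fully absorbed into $P(p)$ and thence into $P(v)$ by the propagation, meaning no ``future'' conflict can arise that was not already recorded. Making this precise -- that the intersection of all path-constraints is realizable by a single greedy sweep on a tree -- is the heart of the proof, whereas the interval bookkeeping and the Lipschitz telescoping are routine.
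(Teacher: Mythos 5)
Your proposal is correct and has essentially the same architecture as the paper's proof: both argue that every rejection by the algorithm is witnessed by a genuine distance obstruction (so the algorithm never wrongly rejects), and that once all intervals are nonempty the final BFS cannot get stuck, which both you and the paper identify as the heart of the matter. The difference lies in how that last fact is justified. The paper organizes the converse as a case analysis of the three failure lines (no root candidate, empty interval, stuck BFS) and rules out the third by inspecting the last DFS update $P(y) := P'(y) \cap [\underline{P'}(x)-M, \overline{P'}(x)+M]$ to obtain mutual compatibility of adjacent intervals; you instead derive compatibility from the explicit invariant $P(v) = \bigcap_{v' \in V' \cup \{r\}} [f'(v') - M\,d(v',v),\, f'(v') + M\,d(v',v)]$. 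Your route is arguably more robust, since the paper's snapshot argument quietly ignores that intervals can continue to shrink after the moment it examines.

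One caveat on your sufficiency step: the reason you give for the overlap --- that $P(v)$ ``was tightened by the parent's interval exactly through the edge $pv$'', i.e.\ $P(v) \subseteq [\underline{P}(p)-M, \overline{P}(p)+M]$ --- does not by itself imply $[f(p)-M, f(p)+M] \cap P(v) \neq \emptyset$; that containment points the wrong way. For instance $P(p)=[0,10]$, $P(v)=\{10+M\}$, $f(p)=0$ satisfies the containment yet has empty intersection. The concrete inequalities you assert, $\underline{P}(v) \le f(p)+M$ and $\overline{P}(v) \ge f(p)-M$, are correct, but they must be derived from your propagation invariant together with $f(p) \in P(p)$ and the triangle inequality $d(v',v) \le d(v',p)+1$ applied to each prescribed $v'$ (take the maximum, respectively minimum, over $v'$ of the resulting bounds). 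With that short derivation inserted, your proof is complete and matches the paper's in strength.
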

\begin{proof}
  Suppose that the algorithm returns a mapping $f$. We claim that it is
  an $M$-Lipschitz mapping extending $f'$. Obviously, there exists a vertex
  mapped to zero under $f$ -- the vertex $r$. Furthermore, the condition
  $$|f(u)-f(v)| \leq M, \forall uv \in E(G)$$
  holds, otherwise the algorithm would stop on Line 20. Finally, we
  observe that for every $v' \in V'$, interval $P(v')$ is
  equal to $[f'(v'),f'(v')]$ at the end of the algorithm so $f$ extends $f'$.
  That finishes the only if part of the equivalence.

  Now let us prove the if part. We will prove that if the algorithm
  does not find an $M$-Lipschitz mapping $f$ extending $f'$, then $f'$ is not extendable.

  Algorithm can stop and fail to find such $f$ exactly from the following reasons:
  \begin{enumerate}
    \item \textit{Algorithm could not find a candidate for the root. (Line 12)}

    If at the end of the algorithm for every vertex $v \in V$, $0 \not\in P(v)$,
    then for every $v \in V$ exists some vertex $v' \in V'$ such
    that $|f(v')| > M \cdot d(v,v')$. We see that $f'$ is not extendable.

    \item \textit{There exists $v \in V$ such that $P(v) = \emptyset$. (Line 16)}

    If such $v$ exists, then it implies the existence of two vertices
    $c,d \in V'$ such that the intersection $I = [f'(c) - M \cdot (c,v), f'(c) + M \cdot (c,v) ]  \cap [f'(d) - M \cdot (d,v), f'(d) + M \cdot (d,v) ]$ is empty.
    However, $I$ is exactly the set of all possible images that we can assign
    to $v$ if $c$ is set to $f'(c)$ and $d$ is set to $f'(d)$. We conclude
    that $f'$ is not extendable.

    \item \textit{Algorithm could not complete the BFS phase. (Line 20)}

    We will actually show that this case is not possible since the only
    possibility how 3) can happen is that some final interval $P(v)$ for some $v \in V$
    is empty and the algorithm will halt even before the BFS phase can start
    (more precisely, the algorithm would already stop at line 16 and the case 2) occurs).

    Assume that all intervals $P(v)$ are nonempty. Consider an edge $xy \in E(G)$.
    Without loss of generality, in the last DFS phase (Line 11),
    $x$ was processed before $y$. Consider intervals $P'(x), P'(y)$ defined
    as the intervals $P(x),P(y)$, respectively, before the last DFS phase. Clearly,
    when $x$ was processed in the last DFS phase, $P(y)$ was set to $P'(y) \cap [P'(x)-M,P'(x)+M]$; a nonempty interval and therefore,
    $$\forall i \in P(x), \exists j \in P(y): |i-j| \leq M.$$
    And conversely,
    $$\forall j \in P(y), \exists i \in P(x): |i-j| \leq M.$$
    We conclude that the case 3) cannot occur.
  \end{enumerate}

  This proves the if part and we are done.
\qed \end{proof}

 We can now conclude the main theorem for trees.

\begin{theorem}
  \textsc{$M$-ParExt} for trees is solvable in time $O(|V|^2)$ and space $O(|V|)$.
\end{theorem}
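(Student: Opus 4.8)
The correctness of Algorithm \ref{alg:trees} is already established by Lemma \ref{lem:treescorrect}, so the only remaining task is to verify the running-time and space bounds by inspecting the algorithm line by line. The plan is to represent every interval $P(v)$ by its two endpoints, so that the intersection performed on Line 8 and the membership test $0 \in P(v)$ on Line 11 each cost $O(1)$ in the word-RAM model. Under this representation the analysis reduces to counting how many times each vertex and edge is touched.

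The dominant contribution comes from the consistency loop on Lines 4--10. Line 1 compares all pairs of prescribed vertices, costing $O(|V'|^2) = O(|V|^2)$, and the initialization on Lines 2--3 is $O(|V|)$. In Lines 4--10 we launch one DFS for each $v' \in V'$; since $G$ is a tree it has $|V|-1$ edges, so a single traversal visits every vertex once, performs one constant-time interval intersection per edge, and therefore costs $O(|V|)$. Running this for all $|V'|$ prescribed vertices gives $O(|V'|\cdot|V|) = O(|V|^2)$, which already matches the claimed bound. The subsequent steps---scanning for a root candidate and re-running the DFS once more (Line 11), the emptiness check (Line 16), and the final BFS that assigns the actual images (Line 19)---each pass over every vertex and edge a constant number of times, contributing only $O(|V|)$ and leaving the total at $O(|V|^2)$.

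For the space bound I would observe that we keep exactly one interval per vertex, each occupying $O(1)$ memory, for $O(|V|)$ in total, and that the DFS recursion stack (or its explicit counterpart) and the BFS queue are likewise bounded by $O(|V|)$. Since no step of the algorithm retains more than a linear amount of auxiliary data at once, the overall space usage is $O(|V|)$.

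The only genuine subtlety, rather than an outright obstacle, is confirming that the loop over $V'$ on Lines 4--10 is truly the bottleneck and hides no super-linear per-pass cost. Here the key remark is that each successive DFS only \emph{sharpens} the already-stored intervals via the intersection on Line 8 and never reinitializes them, so the total work is exactly the sum of $|V'|$ independent linear passes and nothing larger. Summing all contributions yields the stated $O(|V|^2)$ time and $O(|V|)$ space.
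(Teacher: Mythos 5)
Your proposal is correct and follows essentially the same route as the paper's own (much terser) proof: correctness is delegated to Lemma \ref{lem:treescorrect}, the time bound comes from running $O(|V|)$ depth-first searches each costing $O(|V|)$ on a tree plus linear-time remaining phases, and the space bound follows from storing one constant-size interval per vertex. Your version simply spells out the per-line accounting that the paper leaves implicit.
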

\begin{proof}
  We proved that Algorithm \ref{alg:trees} is correct.

  We are running $O(|V|)$ times depth-first search on $G$ plus we perform a constant number
  of linear traversals of data structure for $G$. That, combined with $G$ being a tree, concludes the claim.
\qed \end{proof}

\paragraph*{General case.}
The goal of this section is to show an efficient constructive algorithm for the general case.

\begin{theorem} \label{thm:general}
  The problem \textsc{$M$-ParExt} is solvable in polynomial time on general graphs.
  There is an algorithm for it running in time $O(|V|^3)$ and $O(|V|^2)$ space.
  Furthermore, if an instance for \textsc{$M$-ParExt} is rooted, we have an
  algorithm running in time $O(|V||E|)$ and space $O(|V|^2)$.
\end{theorem}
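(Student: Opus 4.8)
The plan is to lift the interval-propagation idea behind Algorithm~\ref{alg:trees} to graphs with cycles, replacing the single DFS sweep (which is correct only on trees) by a closed-form description of the feasible interval of every vertex in terms of graph distances. For a prescribed vertex $v' \in V'$ with value $f'(v')$ and any $v \in V$, Lipschitzness along a shortest $v$--$v'$ path forces $f(v) \in [f'(v') - M\,d(v,v'),\, f'(v') + M\,d(v,v')]$. Intersecting these necessary constraints over all prescribed vertices yields the candidate interval
$$P(v) := \bigcap_{v' \in V'} \big[\,f'(v') - M\,d(v,v'),\; f'(v') + M\,d(v,v')\,\big],$$
which is exactly the fixpoint that the propagation rule on Line 8 converges to once cycles are allowed. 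I would first prove the structural characterization that $f'$ is extendable (ignoring the root for the moment) if and only if every $P(v)$ is nonempty, and that the latter is equivalent, via the triangle inequality $d(v,c)+d(v,d) \ge d(c,d)$, to the purely local pairwise condition $\abs{f'(c) - f'(d)} \le M\,d(c,d)$ for all $c,d \in V'$.

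The key step --- and the main obstacle, since it is precisely where the tree argument breaks --- is the ``if'' direction: showing that nonempty intervals guarantee a globally consistent assignment despite cycles closing. I would establish this constructively by verifying that the upper-envelope map
$$f(v) := \min_{v' \in V'}\big(f'(v') + M\,d(v,v')\big)$$
is itself an $M$-Lipschitz extension of $f'$. That $f$ extends $f'$ is exactly the pairwise condition (for $w \in V'$ the term $v'=w$ is the minimal one); that $f$ is $M$-Lipschitz on every edge $uv$ follows from $d(u,v') \le d(v,v') + 1$, which gives $f(u) \le f(v) + M$ and, symmetrically, $f(v) \le f(u) + M$. This sidesteps the cycle-closing difficulty entirely: no greedy BFS choice can get stuck, because a valid global mapping is exhibited in closed form rather than assembled edge by edge.

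It remains to enforce the root constraint $f(v_0)=0$. In the rooted case I would simply add $v_0$ to $V'$ with value $0$ and rerun the feasibility test; the augmented system stays consistent precisely when $0 \in P(v_0)$, since the new pairwise constraints $\abs{0 - f'(v')} \le M\,d(v_0,v')$ are exactly the membership $0 \in P(v_0)$. In the unrooted case I would scan for any vertex $v^\star$ with $0 \in P(v^\star)$, treat it as prescribed-to-zero, and recompute the envelope; if no such vertex exists the instance is rejected. Both reductions preserve the characterization above, so correctness follows from the structural lemma together with the envelope construction.

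For the running time, the dominant cost is computing the distances $d(v,v')$ needed to form every $P(v)$, which I store in a distance matrix using $O(|V|^2)$ space. In general I would run a breadth-first search from each vertex, fill the $O(|V|^2)$ interval data, and perform the root scan, for a total of $O(|V|^3)$ time. When the instance is rooted the search can be restricted to the prescribed sources and the envelope evaluated on the fly, bringing the time down to $O(|V|\,|E|)$ while keeping $O(|V|^2)$ space. Because the envelope of the previous paragraph is already a valid extension, the output mapping is immediate and no separate constructive phase is needed beyond one final pass to write down $f$.
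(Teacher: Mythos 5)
Your proposal is correct, but it proves the key sufficiency step by a genuinely different route than the paper. The paper establishes a characterization theorem (its Theorem~\ref{thm:ext}: $f'$ is extendable iff it is $M$-reachable and rooted, or can be made so by adjoining some $(r,0)$) whose hard direction is proved by a greedy induction: extend the mapping one vertex at a time, arguing via the Helly property of the intervals $I_c$ that a consistent value always exists; the paper's Algorithm~\ref{alg:general} then, in the unrooted case, tries every candidate root $v'\in V\setminus V'$ and re-runs the reachability check and greedy extension for each, giving $O(|V|^3)$, while the rooted case skips that outer loop and costs $O(|V||E|)$ (dominated by the all-pairs distance computation). You prove the same characterization --- your pairwise condition $\abs{f'(c)-f'(d)}\le M\,d(c,d)$ is exactly the paper's $M$-reachability --- but replace the inductive Helly argument with the explicit upper envelope $f(v)=\min_{v'\in V'}\bigl(f'(v')+M\,d(v,v')\bigr)$, a graph analogue of the McShane--Whitney Lipschitz extension, whose verification (extends $f'$ by the pairwise condition; edge-Lipschitz by $d(u,v')\le d(v,v')+1$) is immediate. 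This buys you two things: sufficiency becomes a two-line computation with no induction and no appeal to Helly, and the unrooted case needs only a single scan for a vertex with $0\in P(v)$ followed by one envelope evaluation over the augmented set, rather than a fresh extension run per candidate root --- in fact your method runs in $O(|V||E|)$ even unrooted (you conservatively claim only $O(|V|^3)$, which certainly meets the theorem's bound). What the paper's greedy machinery buys in exchange is that it exposes, at every step, the whole interval of feasible values for the next vertex rather than one canonical choice; that flexibility is what the paper later reuses for the \textsc{FixedRange} and \textsc{MaxRange} variants, where additional values must be prescribed. Your envelope, being the pointwise-maximum extension, would serve there too, but that observation would need to be added.
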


It will be useful to define two new properties for integer functions on vertex sets.

\begin{definition}[$M$-reachability]
  We call a mapping $f: V' \to \mathbb Z$ of graph $G$, with $V' \subseteq V(G)$,
  \emph{$M$-reachable} if every pair of vertices $u,v \in V'$ satisfies
  $$\abs{f(u)-f(v)} \leq M \cdot d(u,v),$$

  We omit for which graph is a mapping $M$-reachable if it clear from context.

\end{definition}

\begin{definition}
  We call a mapping $f: V' \to \mathbb Z$ of graph $G$, with $V' \subseteq V(G)$,
  \emph{rooted} if $f^{-1}(0) \neq \emptyset$.
\end{definition}

We can now state and prove the full characterization of extendable situations.

\begin{theorem} \label{thm:ext}
  For a graph $G=(V,E)$, subset $V' \subseteq V$, and a partial mapping $f': V' \to \mathbb{Z}$, the following statements are equivalent:
  \begin{enumerate}
    \item The mapping $f'$ is extendable to a $M$-Lipschitz mapping.
    \item One of the following holds:
    \begin{enumerate}
    \item The mapping $f'$ is $M$-reachable and rooted.
    \item There exist $r \in V \setminus V'$ with $f''$ defined as
      $f'' := f' \cup (r,0)$, such that $f''$ is $M$-reachable.
    \end{enumerate}
  \end{enumerate}
\end{theorem}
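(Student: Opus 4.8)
The plan is to prove the equivalence by establishing the two implications $(1) \Rightarrow (2)$ and $(2) \Rightarrow (1)$ separately, using the notion of $M$-reachability as the bridge between a partial mapping and its extendability. The key intuition is that in a graph, the image of a vertex $v$ under any valid $M$-Lipschitz extension is constrained by every prescribed vertex $u \in V'$: since a shortest path of length $d(u,v)$ connects them, the Lipschitz condition forces $|f(u)-f(v)| \leq M \cdot d(u,v)$. Thus $M$-reachability is a necessary condition, and the real content is showing it is essentially sufficient once we account for the root constraint $f(v_0)=0$.

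First I would prove $(1) \Rightarrow (2)$. Suppose $f'$ extends to an $M$-Lipschitz mapping $f$. For any $u,v \in V'$, applying the Lipschitz condition along a shortest $u$-$v$ path and summing telescopically yields $|f'(u)-f'(v)| = |f(u)-f(v)| \leq M \cdot d(u,v)$, so $f'$ is $M$-reachable. Now I distinguish two cases based on where the zero (the root image) lands. If some vertex of $V'$ is already mapped to $0$ by $f'$, then $f'$ is rooted and we are in case (2a). Otherwise, the vertex $r := f^{-1}(0)$ (which exists since every $M$-Lipschitz mapping sends the root to $0$) lies in $V \setminus V'$, and setting $f'' := f' \cup (r,0)$ gives a partial mapping that is still the restriction of the genuine extension $f$, hence $M$-reachable by the same telescoping argument — this is case (2b).

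Next I would prove $(2) \Rightarrow (1)$, which is the constructive and more delicate direction. The plan is to define a candidate extension directly from the prescribed data. Assume we are in case (2a) (case (2b) reduces to it after adjoining $(r,0)$ to $V'$, since $f''$ is then a rooted $M$-reachable partial mapping). For each $v \in V$, define the interval $P(v) := \bigcap_{u \in V'} [f'(u) - M \cdot d(u,v),\, f'(u) + M \cdot d(u,v)]$, the set of admissible images dictated by each prescribed vertex. The crucial claim is that $M$-reachability guarantees every $P(v)$ is nonempty: this is a Helly-type property of intervals on the line, where pairwise compatibility (which is exactly $M$-reachability, since two intervals $[f'(u)-Md(u,v), f'(u)+Md(u,v)]$ and $[f'(w)-Md(w,v), f'(w)+Md(w,v)]$ intersect precisely when $|f'(u)-f'(w)| \leq M(d(u,v)+d(w,v))$, and the triangle inequality $d(u,w) \leq d(u,v)+d(w,v)$ together with $M$-reachability supplies this) implies global nonemptiness.

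Once the intervals $P(v)$ are known nonempty, I would build $f$ by a traversal (BFS from the rooted vertex, exactly as in Algorithm \ref{alg:trees}), assigning at each step an image in $P(v)$ compatible with the already-assigned parent. The verification that this always succeeds and produces a valid $M$-Lipschitz mapping mirrors the correctness argument of Lemma \ref{lem:treescorrect}: adjacent vertices have intervals whose definitions differ by at most $M \cdot 1$ in each bound, so a compatible integer choice always exists. The main obstacle I anticipate is establishing the nonemptiness of the $\bigcap P(v)$ rigorously on general graphs — the Helly property for intervals reduces this to the pairwise case, but one must carefully argue that $M$-reachability of $f'$ (a condition only on pairs in $V'$) propagates to pairwise compatibility of the induced intervals at an arbitrary vertex $v$, which is precisely where the triangle inequality for graph distance does the work. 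The rooting hypothesis is what pins down $0 \in P(v)$ for at least one vertex, ensuring the constructed mapping respects the definitional requirement $f(v_0)=0$.
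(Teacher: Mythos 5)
Your direction $(1)\Rightarrow(2)$ is correct, and in fact handled more carefully than in the paper (the explicit case split on whether the zero of the extension lands in $V'$ or outside it). Your nonemptiness argument for the intervals $P(v)$ — pairwise intersection via the triangle inequality plus the Helly property of intervals — is also sound, and it is exactly the key insight the paper uses. The gap is in the final construction step of $(2)\Rightarrow(1)$. You freeze the intervals $P(v)$ with respect to the \emph{original} prescribed set $V'$ and then assign values by BFS, requiring each new value to lie in $P(v)$ and be compatible only with the BFS \emph{parent}. On a general graph this fails because of non-tree edges: take a triangle $u,x,y$ with $V'=\{u\}$, $f'(u)=0$, $M=1$. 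Then $P(x)=P(y)=[-1,1]$, and a BFS from $u$ may legally assign $f(x)=-1$ and $f(y)=+1$ (each in its interval and within $M$ of the parent $u$), yet $|f(x)-f(y)|=2>M$ on the edge $xy$. The correctness argument of Lemma \ref{lem:treescorrect} that you invoke is genuinely a tree argument — Algorithm \ref{alg:trees} is safe precisely because every edge is a parent--child edge — so it does not transfer to general graphs.

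There are two standard ways to close the hole. The paper's proof extends the mapping \emph{one vertex at a time} and, crucially, defines the intervals $I_c$ with respect to \emph{all currently mapped vertices} (including those assigned in earlier steps), maintaining the invariant that the growing partial mapping stays $M$-reachable; at the end a total $M$-reachable mapping is automatically $M$-Lipschitz, since adjacent vertices are at distance $1$. Your static intervals lose exactly this consistency between two newly assigned vertices. Alternatively, you can keep your static intervals but drop the freedom of choice: always take the left endpoint, i.e.\ set $f(v):=\max_{u\in V'}\bigl(f'(u)-M\cdot d(u,v)\bigr)$ for every $v$. A direct check using $|d(u,v)-d(u,w)|\le 1$ for adjacent $v,w$ shows this canonical extension is $M$-Lipschitz, and $M$-reachability guarantees it agrees with $f'$ on $V'$; this would make your route work and is arguably cleaner than the inductive argument, but as written your proof does not establish the theorem.
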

\begin{proof}
    $(1)\implies(2)$: For brevity, we will handle the (a) case, (b) case is similar.
  Assume that $f'$ is extendable to an $M$-Lipschitz mapping $f^*$.
  Choose a pair of vertices $u,v \in V'$ such that $\abs{f(u)-f(v)} > M \cdot d(u,v)$.
  Pick a shortest path $u=x_1,\ldots,x_l=v$ between $u$ and $v$. Each summand of the sum
  $\sum_{i=2}^{l}(f(x_i)-f(x_{i-1}))$ is at most $M$ and at least $-M$, which contradicts
  that the mapping $f^*$ is $M$-Lipschitz.

  $(2)\implies(1)$: Again, we will only prove the (a) case, (b) can be done
  in analogous way. We will show that we can extend $f'$ mapping by one vertex
  and preserve $M$-reachability. By applying this inductively we will prove
  that $(2)$ implies $(1)$.

  Choose a vertex $a$ that is adjacent to some vertex $b$ such that $f'(b)$ is
  defined and $f'(a)$ is not defined. For every vertex $c \in f'(V')$,
  the vertex $b$ is reachable. Thus we can always find a number for $a$ such that
  $c$ will be reachable from $a$. For every $c$ we can define interval $I_c$
  containing all possible values for $a$ such that $c$ is reachable from $a$. This
  is a closed connected interval in $\mathbb{Z}$. Furthermore, the set
  system $\{I_c | \forall c \in f'(V')\}$ has the Helly property.
  Finally, for every two $c_1,c_2 \in f'(V')$ there is a nonempty intersection.
  Otherwise we would get contradiction with $M$-reachability of $f'$.
  Thus we can pick a suitable $k \in \Large\cap_{c \in f'(V')} I_c$ and extend
  $f'$ by setting $f' := f' \cup \{ (a,k) \}$. We also set $V' := V' \cup a$.
\qed \end{proof}

Theorem \ref{thm:ext} and its proof yield an algorithm for constructing an
extended mapping. Now we can finally prove Theorem \ref{thm:general}.

\begin{algorithm}[h]
\caption{A constructive algorithm for \textsc{$M$-ParExt} on general graphs.}
\label{alg:general}
\begin{algorithmic}[1]
\REQUIRE A graph $G$, a set of vertices $V' \subseteq V(G)$, and a partial $M$-Lipschitz mapping $f': V' \to \Z$.
\smallskip
\STATE Compute all-pairs distances using Thorup's algorithm \cite{Thorup}.
\IF{$f'$ rooted} 
\STATE Set $f'':=f'$ and go to line 8.
\ENDIF
\IF{$f'$ not rooted}
\FOR{every $v'$ in $V\setminus V'$}
  \STATE Set $f'' := f' \cup (v',0)$.
  \IF{$f''$ is $M$-reachable}
  \WHILE{some vertex not mapped under $f''$}
  \STATE Pick a non-mapped vertex $a$ adjacent to some already mapped vertex.
  \STATE Choose some $k \in \Large\cap_{c \in f''(V'')} I_c$ with $c$'s nad $I_c$'s defined
  analogously as in the proof of Theorem \ref{thm:ext}.
  \STATE Set $f'' := f'' \cup (a,k)$.
  \ENDWHILE
  \RETURN \TRUE
  \ELSE
  \RETURN \FALSE
  \ENDIF
\ENDFOR
\ENDIF
\end{algorithmic}
\end{algorithm}

\begin{proof}[Proof of Theorem \ref{thm:general}]
  Because of Theorem \ref{thm:ext} our Algorithm \ref{alg:general} is correct.
  We need to analyze its complexity.

  Line 1 computes all-pairs distances in time $O(|V||E|)$. 
  Line 8 takes $O(|V|^2)$ time and the code between lines 10 and 12 also.
  If $f'$ was rooted, we jump straight into the for-cycle and thus $O(|V|)$ factor
  is saved, otherwise we get additional factor $O(|V|)$. Thus we conclude the claim.
\qed \end{proof}

\paragraph*{Fixed range and the maximum possible range.}
We can also ask if it is possible to extend a given mapping to an $M$-Lipschitz mapping
with a given range. Naturally, one can also define \textsc{MaxRange-$M$-LipExt} asking
for the maximum possible range of extending mapping. Minimum variant can be approached
in the same manner. Formally, we have the following problem.

\computationproblem
{Partial $M$-Lipschitz mapping extension with a fixed range - \textsc{FixedRange-$M$-LipExt}}
{A connected graph $G=(V,E)$, a subset $V' \subseteq V$ with a function $f': V' \to \Z$ and $r \in \N$.}
{Does there exist an $M$-Lipschitz mapping $f$ of $G$ such that $f' \subseteq f$ and $\rng(f) = r$?}

Using Algorithm \ref{alg:general} as a subroutine we can first choose an
interval $[a,b]$ such that $a,b \in \N$ and $0 \in [a,b]$. Then we can choose
independently two different vertices $v_a,v_b \in V\setminus V'$ .
We can then set $f_{[a,b],v_a,v_b} := f' \cup (v_b,b) \cup (v_a,a)$. Of
course, $a$ (or $b$) can already be the image of some vertex in $V'$ and in
that case we choose only $v_b$ (or $v_a$).

For every choice of $[a,b]$ and $v_a, v_b$ we run Algorithm \ref{alg:general}
and see if $f_{[a,b],v_a,v_b}$ can be extended. This algorithm is correct and
since $\abs{a-b} \le |V|$ we deduce that  \textsc{FixedRange-$M$-LipExt}
can be solved in time $O(|V|^7)$. Furthermore, \textsc{MaxRange-$M$-LipExt} can be
solved in time $O(|V|^7 \cdot \log |V|)$ by combining the algorithm for
 \textsc{FixedRange-$M$-LipExt} with a binary search for a suitable $r$.

 Finally, we note that in the case of diameter being bounded we can easily modify
 these algorithms to run in time $O(|V|^5)$ for both \textsc{FixedRange-$M$-LipExt}
 and \textsc{MaxRange-$M$-LipExt}.

\subsection{Partial strong $M$-Lipschitz mappings}

We note that for \textsc{Strong $M$-LipExt} we can modify Algorithm \ref{alg:trees}
and \ref{alg:general}. However, more involved analysis is needed. If we are satisfied with a worse running time, we can conclude the following theorem by analysis similar
to that for \textsc{$M$-LipExt} in Theorem \ref{thm:parext}, using results of Feder and Hell \cite{feder1999list}.

\begin{theorem}
  \textsc{Strong $M$-LipExt} is solvable in polynomial time.
\end{theorem}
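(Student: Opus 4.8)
The plan is to mirror the reduction from the proof of Theorem \ref{thm:parext}, replacing the reflexive target graph by its \emph{strong} (loopless) analogue and invoking the bipartite list-homomorphism dichotomy of Feder and Hell \cite{feder1999list} in place of the reflexive one \cite{feder1998list}. First I would note that, by Lemma \ref{lem:strongchar}, a strong $M$-Lipschitz mapping exists only when $G$ is bipartite, so we may test bipartiteness in linear time and reject immediately otherwise; from now on assume $G$ is bipartite. As in Theorem \ref{thm:parext} we may assume $f'(V') \subseteq [-N,N]$ for a bound $N$ polynomial in $|V(G)|$ (here $M$ is a fixed parameter), and we build the truncated strong target graph $T_{\pm}$ with $V(T_{\pm}) := \{-N,\ldots,N\}$ and $E(T_{\pm}) := \{(a,b) : |a-b| = M\}$. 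The essential difference from the graph $T$ in Theorem \ref{thm:parext} is that $T_{\pm}$ carries \emph{no} loops, which is exactly what encodes the strong constraint $|f(u)-f(v)| = M$ rather than $\le M$. Setting $L(v) := \{f'(v)\}$ for $v \in V'$ and $L(w) := V(T_{\pm})$ otherwise, a routine check (identical to the non-strong case) shows that \textsc{LHom($T_{\pm}$)} on $G$ has a positive answer if and only if $f'$ is extendable to a strong $M$-Lipschitz mapping.

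It remains to argue that \textsc{LHom($T_{\pm}$)} is tractable. Since $T_{\pm}$ is irreflexive rather than reflexive, the relevant tractability criterion is the bipartite one: Feder and Hell \cite{feder1999list} show that \textsc{LHom($H$)} is polynomial-time solvable whenever $H$ is a bipartite graph whose complement is a circular-arc graph (a so-called bi-arc graph). Hence, paralleling the ``$T$ is an interval graph'' step in Theorem \ref{thm:parext}, the proof reduces to verifying that $T_{\pm}$ lies in this tractable class. Once this structural fact is in hand, running the algorithm of \cite{feder1999list} on the instance \textsc{LHom($T_{\pm}$)} solves \textsc{Strong $M$-LipExt} in polynomial time.

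To establish the structural claim I would exploit the simple shape of $T_{\pm}$: its edges join vertices whose labels differ by exactly $M$, so its connected components are precisely the $M$ residue classes modulo $M$ inside $\{-N,\ldots,N\}$, each of which induces a single path. Thus $T_{\pm}$ is a disjoint union of paths, in particular a bipartite linear forest, and I would then invoke the fact that forests (and \emph{a fortiori} disjoint unions of paths) are bi-arc graphs, together with closure of the bi-arc class under disjoint union, to conclude that the complement of $T_{\pm}$ is a circular-arc graph. The hard part will be exactly this last verification: exhibiting the bi-arc representation of $T_{\pm}$, or equivalently checking directly that the complement of a linear forest is a circular-arc graph. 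This is the strong-mapping counterpart of the interval-graph argument that settles the non-strong case, and is a touch more delicate because the target is irreflexive; everything else in the argument transfers verbatim from Theorem \ref{thm:parext}.
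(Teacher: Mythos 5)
Your reduction is the same one the paper intends: the paper's entire argument for this theorem is the remark that it follows ``by analysis similar to that for \textsc{$M$-LipExt} in Theorem \ref{thm:parext}, using results of Feder and Hell \cite{feder1999list}'', and your truncated loopless target $T_{\pm}$ with singleton lists on $V'$ and full lists elsewhere is exactly that analysis carried out. The bipartiteness pre-test, and the observation that $T_{\pm}$ decomposes into the $M$ residue classes modulo $M$, each inducing a path, are both correct.

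However, one justification you offer is false, and you should not lean on it: it is \emph{not} true that all forests are bi-arc graphs. Feder, Hell and Huang themselves exhibit a tree --- the ``bipartite claw'', obtained from $K_{1,3}$ by subdividing every edge once --- whose complement is not a circular-arc graph, so \textsc{LHom} of that tree is NP-complete; being a forest is therefore not sufficient. Similarly, ``closure of the bi-arc class under disjoint union'' cannot be invoked casually: the complement of a disjoint union is a join, and circular-arc graphs are not in general closed under joins. What saves your argument is the specific structure of $T_{\pm}$: a disjoint union of paths is an interval bigraph (lay the interval representations of the path components side by side on the real line), and complements of interval bigraphs are circular-arc graphs of clique covering number two (Hell and Huang), which is precisely the tractability criterion of \cite{feder1999list}; alternatively one can construct the circular-arc representation of the complement of $T_{\pm}$ by hand. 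With that single repair --- replacing the ``forests are bi-arc'' step by the interval-bigraph (or direct) verification --- your proof is complete and coincides with the proof the paper sketches.
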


\section{Concluding remarks}

We initiated the study of algorithmic aspects of Lipschitz mappings of graphs. We showed that the problem of finding the maximum range and extending partial Lipschitz mapping are both solvable in
polynomial time.

We conclude with two open problems stemming from our research.

\paragraph*{Improving the complexity of extension problems.}

The following open problem naturally arises: Are there algorithms for \textsc{FixedRange-$M$-LipExt} and \textsc{MaxRange-$M$-LipExt}
  running in time asymptotically better than ours?

\paragraph*{Average range.}

We propose to settle the complexity of the following natural problem associated
with the average range. The problem \textsc{$M$-AvgRange} has a connected graph $G$
on input. We ask what is the average range of $M$-Lipschitz mappings of
$G$, i.e.\ $\avg_M(G)$? Even the case $M=1$ seems challenging.

We note that for several classes of graphs, e.g.\ paths, cycles, complete graphs,
complete bipartite graphs we showed  \cite{bok2018graph} a closed formulas implying that we can compute \textsc{$1$-AvgRange} for these classes efficiently.

\section*{Acknowledgments}

This research was supported by the Charles University Grant Agency, project GA UK 1158216
and by the Center of Excellence - ITI (P202/12/G061 of GA\v{C}R).

\bibliographystyle{plain}
\bibliography{bibliography}

\appendix

\section{Omitted theorems and proofs}

\begin{proof}{(of Lemma \ref{lem:strongchar})}
  Observe that in every strong $M$-Lipschitz mapping, all
  vertices are mapped to some number of the form $k \cdot M$, where $k \in \Z$.

  Consider a non-bipartite graph $G$ and a strong $M$-Lipschitz mapping $f$ of $G$.
  We recall that a graph is bipartite if and only if it does not contain a cycle of odd
    length as a subgraph.

  Therefore, $G$ contains some odd cycle $C$ with edges $v_1v_2,\ldots, v_lv_1$.
  Let us denote 
  $$e_i := f(v_{(i + 1 \textrm{ mod } l)}) - f(v_{(i \textrm{ mod } l)}), \forall i \in \{1,\ldots, n\}.$$
  We see that $e_i \in \{+M, -M\}$. Moreover, $\sum_{i=0}^n e_i = 0$ from the definition of~$e_i$. However, this sum has an odd number of summands
  and thus we get a contradiction.
\qed \end{proof}

\begin{proof}{(of Lemma \ref{lem:diam})}
The existence of $f$ with $\rng(f) > M \cdot (\diam(G) + 1)$ would imply the existence of a path
subgraph in $G$ with endpoints $u, v$ such that their images would be in distance $|f(u) -
f(v)| = M \cdot (\diam(G) + 1)$. However, that would mean that all paths between $u$ and
$v$ have to map to some connected subgraph of $\Z$ of size greater than
$M \cdot (\diam(G) + 1)$ which is a contradiction with the definition of diameter.
\qed \end{proof}

\begin{proof}{(of Theorem \ref{thm:diamstrong})}
  We can take Lipschitz mapping $f$ as in Theorem \ref{thm:diam}. However, we
  have to check if it is a strong $M$-Lipschitz mapping.

  Suppose that $f$ is not a strong $M$-Lipschitz mapping. That means that there exist
  two vertices $a, b \in V(G)$ such that $ab \in E(G)$ and $f(a) = f(b)$.
  Furthermore, from the definition of $f$, $d(r,a) = d(r,b)$. Define $l := d(r,a)$.

  From the definition of the distance, we get that there exist an $(r,a)$-path
  and $(r,b)$-path, both of length $l$. Since $G$ is bipartite, parts to which vertices
  belong have to alternate along the $(r,a)$-path and along the $(r,b)$-path as well.
  Additionally, parts to which $a$ and $b$ belong are determined by the parity
  of their distance from root. But that means that $a$ and $b$ belong to the same
  part. Since they are neighbors, we get a contradiction.

  Finally, we note that the previous argument works also in the case of $r$ being
  either the vertex $a$ or $b$.
\qed \end{proof}

\end{document}